\newtheorem{thm}{Theorem}[section]
\newtheorem{prop}[thm]{Proposition}
\newtheorem{lemma}[thm]{Lemma}
\newtheorem{lem}[thm]{Lemma}
\newtheorem{defn}[thm]{Definition}
\newcommand{\bbc}{\mathbb{C}}
\newcommand{\bbq}{{\mathbb{Q}}}
\newcommand{\bbr}{{\mathbb{R}}}
\newcommand{\bbz}{{\mathbb{Z}}}
\newcommand{\cO}{{\mathcal{O}}}
\newcommand{\Aut}{\operatorname{Aut}}
\newcommand{\Lie}{\operatorname{Lie}}
\newcommand{\Res}{\operatorname{Res}}
\newcommand{\Hilb}{\operatorname{Hilb}}
\newcommand{\Sp}{\operatorname{Sp}}
\newcommand{\Jac}{\operatorname{Jac}}
\newcommand{\Hom}{\operatorname{Hom}}
\newcommand{\End}{\operatorname{End}}
\renewcommand{\Im}{\operatorname{Im}}
\title{Shimura Varieties, Kummer Varieties, and Rational Curves}
\author{Bo-Hae Im}
\address{Department of Mathematical Sciences, KAIST, 291 Daehak-ro, Yuseong-gu, Daejeon, 34141, South Korea}
\email{bhim@kaist.ac.kr}
\thanks{Bo-Hae Im was supported by Basic Science Research Program through the National Research Foundation of Korea(NRF) funded by the Ministry of  Science \& ICT(NRF-2020R1A2B5B01001835).}
\author{Michael Larsen}
\address{Department of Mathematics, Indiana University, Bloomington, IN, 47405, U.S.A.}
\email{mjlarsen@indiana.edu}
\thanks{Michael Larsen was partially supported by NSF grant DMS-1702152.}
\date{\today}
\author{Sailun Zhan}
\address{Department of Mathematics, Indiana University, Bloomington, IN, 47405, U.S.A.}
\email{zhans@iu.edu}
\begin{document}

\maketitle

\begin{abstract}
For a very general product $A$ of seven or more elliptic curves, every rational curve on the Kummer variety of $A$ projects trivially onto the Kummer variety of at least
one of its factors.  As a consequence, a very general member of certain families of abelian varieties parametrized by connected Shimura  varieties of unitary type
has the property that its Kummer variety has no rational curves.
\end{abstract}

\section{Introduction}
Let $A/\bbc$ be an abelian variety and $G\subset \Aut(A)$ a finite group of automorphisms of $A$
as an abelian variety.  We would like to understand the rational curves on the quotient $A/G$.
At one end of the spectrum, $A/G$ can be a rational variety.  A celebrated result of Looijenga \cite{L76} asserts that
$E^n/W$ is a weighted projective space when $E$ is an elliptic curve
and $W$ is the Weyl group of a root system of rank $n$
acting on $E^n$ through its action on the root lattice.  Koll\'ar and Larsen \cite{KL09} showed that
in general $A/G$ is uniruled if and only if $G$ fails the \emph{Reid-Tai condition}.  This means that there exists an automorphism $g\in G$ whose eigenvalues on $\Lie(A)$ are $e^{2\pi i x_1},\ldots,e^{2\pi i x_n}$ where
$0\le x_1\le x_2\le\cdots\le x_n<1$, and $0<\sum x_i < 1$.  Im and Larsen \cite{IL15} showed that if
$\sum x_i = 1$, $A/G$ must still have at least one rational curve.

At the other end of the spectrum, we have the classical result that $A$ itself contains no rational curves.
Pirola \cite{Pi89} proved that for a very general abelian variety of dimension $n\ge 3$, the Kummer variety
$A/\{\pm 1\}$ has no rational curves.  Unfortunately, we do not know much about the locus for which Kummer varieties of dimension $n\ge 3$ admit rational curves.  In this paper we show that there are no rational curves on Kummer varieties associated to a very general point of certain Shimura subvarieties of Siegel moduli spaces.

Our strategy is to find a  set of abelian varieties which are isogenous to products of elliptic curves and which are dense in the desired Shimura variety.  Of course, Kummer varieties of
products of elliptic curves always have some rational curves but they may not have enough to permit there to be a rational curve for a sufficiently general point in the closure.  We introduce the following definition.

\begin{defn}
Let $A:=E_1\times \cdots \times E_n$ be a product of elliptic curves and let $C$ be a curve on $\bar A := A/\{ \pm 1\}$, where the involution maps $P$ to $-P$ for $P\in A$. The curve $C$ is called {{\emph{diagonal}}} if the projection $C\to\bar{E_{i}}$ is nonconstant for each $i$.
\end{defn}

Throughout this paper, a variety will always be an integral separated scheme of finite type over $\bbc$.

\section{Products of elliptic curves}

For $i=1,2,\ldots, n$, let $E_i$ be an elliptic curve whose affine model is given by $y_i^2 = x_i(x_i-1)(x_i-\lambda_i)$ for some $\lambda_i\in \bbc\setminus\{0,1\}$. The main result in this section is the following:
\begin{thm}\label{diagonal}
Let $n\geq 7$ be an integer.  For a very general $(\lambda_{1},\ldots,\lambda_{n})$ in ${{{\bbc}^{n}}}$, there is no diagonal rational curve on the Kummer variety
$\overline{E_1\times\cdots\times E_n}$.
\end{thm}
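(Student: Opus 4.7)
The plan is to reduce the existence of a diagonal rational curve on $\bar A = A/\{\pm 1\}$, where $A = E_1 \times \cdots \times E_n$, to an algebraic congruence condition involving $n$ rational functions and a squarefree polynomial on $\bbp^1$, and then, by stratifying by numerical invariants and counting parameters, to show that the exceptional locus in $\bbc^n$ lies in a countable union of proper subvarieties.

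Given a diagonal rational curve $C \subset \bar A$, I will first lift it. Let $\tilde C$ be the normalization of $C \times_{\bar A} A$. Since $A$ has no rational curves, the double cover $\tilde C \to C \cong \bbp^1$ cannot split, so $\tilde C$ will be a connected hyperelliptic curve of some genus $g \geq 1$, with hyperelliptic involution $\iota$ induced from $-1 \in \Aut(A)$, and the induced morphisms $\psi_i : \tilde C \to E_i$ will be non-constant and satisfy $\psi_i \circ \iota = -\psi_i$. Writing $\tilde C$ in Weierstrass form $y^2 = p(x)$ with $p$ squarefree of degree $2g + 2$, and letting $B \subset \bbp^1$ denote its reduced zero set, the anti-equivariance forces $\psi_i(x, y) = (\phi_i(x), h_i(x)\,y)$ for some $\phi_i, h_i \in \bbc(x)$; substituting into $Y^2 = X(X - 1)(X - \lambda_i)$ yields $h_i(x)^2 p(x) = \phi_i(x)(\phi_i(x) - 1)(\phi_i(x) - \lambda_i)$. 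Hence existence of $\psi_i$ is equivalent to requiring $\phi_i(\phi_i - 1)(\phi_i - \lambda_i) \cdot p$ to be a square in $\bbc(x)$, or equivalently on $\bbp^1$ to the divisor congruence
\[
\phi_i^*\bigl(\{0, 1, \lambda_i, \infty\}\bigr) \equiv B \pmod 2 \qquad (i = 1, \ldots, n);
\]
in particular all $n$ pullback divisors on the left share the same mod-$2$ class, namely $B$.

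For each fixed $(g, d_1, \ldots, d_n)$, the parameter space of $(B, \phi_1, \ldots, \phi_n, \lambda_1, \ldots, \lambda_n)$, modulo the $\mathrm{PGL}_2$-action on the source, has dimension $2g + 2 \sum d_i + 2n - 1$, while each congruence cuts expected codimension $2 d_i + g + 1$ (since the degree-$4 d_i$ pullback must equal $B$ plus twice an effective divisor of degree $2 d_i - g - 1$). Projecting to the $\lambda$-coordinate $\bbc^n$, the image is expected to have dimension strictly less than $n$ once $n$ is large, and a countable union over the tuples $(g, (d_i))$ then isolates the exceptional locus inside a countable union of proper subvarieties of $\bbc^n$.

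The main obstacle will be verifying that these codimension estimates are actually achieved rather than merely expected, since excess components in the parameter space can arise from hyperelliptic curves with extra automorphisms, bielliptic or trigonal structures, or other special configurations that force several congruence conditions to become redundant and give hyperelliptic Jacobians with unexpectedly many elliptic factors. The bound $n \geq 7$ is precisely the threshold beyond which no such excess component can project dominantly onto $\bbc^n$, and the central technical step of the proof will be a careful case analysis of the possible ramification profiles of the $\phi_i$ and the residual structure of $\tilde C$, ruling out excess intersection uniformly in $(g, (d_i))$.
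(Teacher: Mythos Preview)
Your setup is right and coincides with the paper's: lifting $C$ to the hyperelliptic double cover $\tilde C\subset A$ and reading off that every $\phi_i^*(\{0,1,\lambda_i,\infty\})$ lies in the \emph{same} class modulo $2$ (equivalently, that the polynomials $S_i$ all represent a single coset of $\bbc(t)^{\times 2}$) is exactly the structural observation the paper extracts before its Lemma~\ref{kummer}.

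The gap is everything after that. You propose to bound the bad locus in $\bbc^n$ by a naive parameter count and then candidly note that the entire difficulty lies in showing the expected codimension is achieved, deferring this to an unspecified ``careful case analysis of ramification profiles.'' That is not a proof but a restatement of the problem. Excess components here are governed by the locus of hyperelliptic curves whose Jacobians acquire many elliptic factors, which is not a ramification-profile phenomenon and is not obviously controllable uniformly in $(g,(d_i))$; nothing in your outline explains how the threshold $n\ge 7$ would emerge from such an analysis.

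The paper does \emph{not} attempt a dimension count. After fixing the common square class $D$ of the $S_i$, it invokes two concrete finiteness inputs. First, for fixed $(\lambda_1,\lambda_2)$ the Kummer surface $\overline{E_1\times E_2}$ is K3 and hence carries no positive-dimensional family of rational curves; this forces only countably many rational curves and therefore only countably many candidate classes $D$ (Lemma~\ref{kummer}). Second, for each fixed $D$ the associated double cover $u^2=D(v)$ is a fixed curve $Y$, and any $\lambda_3$ admitting a solution of \eqref{star} forces $E(\lambda_3)$ to be isogenous to one of the finitely many simple factors of $\Jac(Y)$; since each isogeny class is countable, only countably many $\lambda_3$ work (Lemma~\ref{countable}). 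A constructibility argument (Lemma~\ref{3E}) then shows the bad locus in the three chosen coordinates lies in countably many proper subvarieties, and projecting back gives the theorem. These two ingredients---K3 rigidity for the pair and the Jacobian/isogeny argument for the third factor---are the substantive ideas that make the proof close, and they are absent from your proposal.
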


Before we give the proof of this theorem, let us make the following observation. If $C$ is a diagonal curve on the Kummer variety, $\overline{E_1\times\cdots\times E_n}$, let $t$ denote a generator of the function field of $C$. As $C$ is diagonal, for each $i$, the non-constant morphism $\psi_i\colon C\to \bar E_i$ determines a non-constant rational function $R_i(t) = P_i(i)/Q_i(t) = \psi_i^*(x_i)$ in the function field of $C$ for some polynomials $P_i, Q_i$.

Let
\[
S_i(t) = P_i(t) (P_i(t)-Q_i(t))(P_i(t)-\lambda_i Q_i(t))Q_i(t).
\]
As
$${{S_{i}(t)}} = Q_i(t)^4 \psi_i^*(x_i(x_i-1)(x_i-\lambda_i)) = (Q_i(t)^2 \psi_i^*(y_i))^2,$$
we have for all $i$ and $j$,
$$\sqrt{S_iS_j} = Q_i^2 Q_j^2 \psi_i^*(y_i) \psi_j^*(y_j)$$
in the function field of $C$.

We claim that if $n\geq 7$, there must be at least three coordinates $i,j,k$ for which $S_{i}$, $S_{j}$, and $S_{k}$ are the same in $\bbc(t)^{\times}/\bbc(t)^{\times 2}$. Indeed, otherwise, there would have to be at least $4$ different cosets of $\bbc(t)^{\times 2}$ determined by the $S_{i}$'s. Then the function field of $C$ would contain the field
\[
\bbc(t,\sqrt{S_{1}S_{2}},\sqrt{S_{1}S_{3}}, \ldots, \sqrt{S_{1}S_{n}}),
\]
which is a $(\bbz/2\bbz)^m$-extension of $\bbc(t)$, where $m\geq 3$.  This is impossible by the following lemma.

\begin{lemma}
For $m\ge 3$, any $(\bbz/2\bbz)^m$-extension of $\bbc(t)$ has genus $\ge 1$.
\end{lemma}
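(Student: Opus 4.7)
The plan is to pass to the smooth projective curve $X/\bbc$ with function field $L$ and apply Riemann-Hurwitz to the Galois cover $\pi\colon X\to\bbp^1_\bbc$, whose Galois group is $G=(\bbz/2\bbz)^m$. The first observation I would exploit is that in characteristic zero every inertia subgroup is cyclic, and the only cyclic subgroups of $G$ have order $1$ or $2$. Consequently each ramification index is exactly $1$ or $2$, and above each of the $r$ branch points in $\bbp^1$ there are precisely $2^{m-1}$ points of $X$, each contributing $1$ to the ramification divisor. Riemann-Hurwitz then collapses to
\[
2g(X)-2 \;=\; -2\cdot 2^m + r\cdot 2^{m-1},
\]
so $g(X)=1+2^{m-2}(r-4)$. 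It therefore suffices to show $r\ge 4$; I will in fact establish the sharper bound $r\ge m+1$.

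To prove the bound on $r$, I would write $L=\bbc(t)(\sqrt{f_1},\ldots,\sqrt{f_m})$ and work inside $V:=\langle \bar f_1,\ldots,\bar f_m\rangle\subset\bbc(t)^\times/\bbc(t)^{\times 2}$, which by Kummer theory has $\mathbb{F}_2$-dimension $m$. A place $P$ is a branch point precisely when the parity-of-valuation functional $f\mapsto v_P(f)\bmod 2$ is nonzero on $V$, so $r=|B|$ for $B$ the branch set. The key auxiliary object is the global map
\[
\phi\colon \bbc(t)^\times/\bbc(t)^{\times 2}\;\longrightarrow\;\bigoplus_{P\in\bbp^1}\mathbb{F}_2,\qquad f\mapsto (v_P(f)\bmod 2)_P,
\]
which I claim is (i) injective, since its kernel is the image of $\bbc^\times$ modulo squares and $\bbc^\times$ is $2$-divisible, and (ii) lands in the codimension-one hyperplane $H=\{(a_P):\sum_P a_P=0\}$, because every principal divisor on $\bbp^1$ has degree zero.

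From here the conclusion is essentially forced: restricting $\phi$ to $V$, the image lies in $H\cap\bigoplus_{P\in B}\mathbb{F}_2$, a space of dimension $|B|-1=r-1$; injectivity then gives $m\le r-1$, hence $r\ge m+1\ge 4$ and $g(X)\ge 1$. The only step requiring care is this dimension count: bare injectivity of $\phi$ would yield only $r\ge m$, which fails to cover the borderline case $m=3$, $r=3$. What closes the gap is the parity-of-degree relation — the fact that $\bbc(t)^\times/\bbc(t)^{\times 2}$ embeds as a hyperplane in $\bigoplus_P\mathbb{F}_2$ rather than as the whole space — and this single linear constraint is the one nontrivial ingredient in the argument.
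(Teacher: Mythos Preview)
Your argument is correct, but it takes a longer route than the paper's. Both proofs begin with the same Riemann--Hurwitz identity
\[
2g-2 \;=\; -2\cdot 2^{m} + r\cdot 2^{m-1},
\]
using (as you justify explicitly) that all inertia groups in $(\bbz/2\bbz)^m$ are cyclic of order at most $2$. From here the paper simply observes that for $m\ge 3$ the right-hand side is divisible by $4$, forcing $g$ to be odd and hence $g\ge 1$; no information about $r$ is needed. You instead prove the Kummer-theoretic bound $r\ge m+1$ by embedding $V$ into the degree-zero hyperplane of $\bigoplus_{P}\mathbb{F}_2$, and then read off $g=1+2^{m-2}(r-4)\ge 1$. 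The paper's parity trick is shorter and avoids the auxiliary linear-algebra argument; your approach is more informative, yielding the explicit growth $g\ge 1+2^{m-2}(m-3)$ and making transparent why the borderline case is $r=4$.
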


\begin{proof}
Denote its genus by $g$ and the number of the branch points of the extension by $r$. Then by the Riemann-Hurwitz formula, we have
\[
2g-2=2^{m}(-2)+2^{m-1}r.
\]
The right hand side of the equation is divisible by $4$, so $g$ must be odd.
\end{proof}

We assume henceforth that
$$S_{1}\equiv S_{2}\equiv S_{3}\pmod{\bbc(t)^{\times 2}}.$$

We have the following lemma.

\begin{lemma}\label{kummer}
There are countably many rational curves on $\overline{E_{1}\times E_{2}}$.
\end{lemma}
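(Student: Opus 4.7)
The plan is to interpret $\overline{E_1\times E_2}$ as a (singular) K3 surface and then appeal to the general principle that a non-uniruled projective surface supports only countably many rational curves. Writing $A = E_1\times E_2$, the quotient $\overline A = A/\{\pm 1\}$ has sixteen ordinary double points at the images of the $2$-torsion of $A$, and its minimal resolution $\widetilde X$, obtained by blowing up the nodes, is a smooth K3 surface. Every rational curve on $\overline A$ has a strict transform in $\widetilde X$ which is rational and distinct from the sixteen exceptional $(-2)$-curves, and conversely any non-exceptional rational curve on $\widetilde X$ descends to one on $\overline A$. Hence it suffices to bound the rational curves on $\widetilde X$.

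First I would observe that $\widetilde X$ is not uniruled: since $K_{\widetilde X}$ is trivial, every plurigenus of $\widetilde X$ equals $1$, whereas all plurigenera of a uniruled smooth projective variety vanish. Next, I would examine the Hilbert scheme $\Hilb(\widetilde X)$, which, as a disjoint union over Hilbert polynomials of finite-type quasi-projective schemes, has only countably many irreducible components. Let $T$ be one such component whose general point parametrizes an integral rational curve in $\widetilde X$, and let $\mathcal C \subset T \times \widetilde X$ be the universal family, which is irreducible by flatness together with integrality of the fibers. Let $Y \subset \widetilde X$ denote the image of $\mathcal C$. If $\dim Y = 2$, then $Y = \widetilde X$ would be uniruled, contradicting the previous step. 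If $\dim Y = 1$, then each integral fiber $\mathcal C_t$ lies in the integral curve $Y$ and hence equals $Y$, forcing $T$ to be a single point. Either way $\dim T = 0$, so the rational curves on $\widetilde X$ form a countable set.

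The only real obstacle is the careful execution of the Hilbert-scheme step --- particularly verifying irreducibility of the universal family and ensuring that distinct zero-dimensional components correspond to distinct subschemes --- but this is standard. A more elementary alternative would be to lift each rational curve on $\overline A$ to an irreducible $[-1]$-stable curve $C \subset A$ whose normalization is hyperelliptic (or elliptic) with $[-1]$ acting as the hyperelliptic involution, and to catalog such $C$ by their N\'eron--Severi classes in $A$; this is transparent when $E_1$ and $E_2$ are not isogenous, but must contend with higher-genus hyperelliptic contributions in the isogenous case, so the K3-theoretic route seems preferable.
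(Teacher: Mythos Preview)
Your proof is correct and takes a genuinely different route from the paper's. The paper argues explicitly with coordinates: writing a rational curve on $\overline{E_1\times E_2}$ via $x_i\mapsto P_i(t)/Q_i(t)$, it encodes the square class of $S_1,S_2$ by a finite set of branch points $c_1,\ldots,c_k$, packages the resulting conditions as a system of polynomial equations in the coefficients of $P_i,Q_i$, and then invokes the cited fact that a K3 surface admits no $1$-parameter family of rational curves to force each such solution variety to be zero-dimensional. You instead pass to the minimal resolution $\widetilde X$, observe that triviality of $K_{\widetilde X}$ forbids uniruledness, and run the standard Hilbert-scheme dichotomy to conclude that each component contributing integral rational curves is a point. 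Both arguments rest on the same underlying K3 input (no moving family of rational curves); you essentially \emph{prove} that input from non-uniruledness, while the paper \emph{cites} it. The paper's explicit presentation has the side benefit of setting up the square-class data $(t-c_1)\cdots(t-c_k)$ used immediately afterward in Lemma~\ref{countable}, but that is not part of the statement of Lemma~\ref{kummer} itself, and the consequence actually needed---that only countably many classes $S_i\bmod\bbc(t)^{\times 2}$ arise---follows just as well from your countability conclusion.
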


\begin{proof}
Let $\pi$ denote the quotient morphism $\overline{E_1\times E_2}\to \bar E_1\times \bar E_2$.
Thus $\pi^{-1}$ applied to the affine $(x_1,x_2)$-plane in $\bar E_1\times \bar E_2$
has coordinate ring
$$\bbc[x_1,x_2,y_1y_2] = \bbc[x_1,x_2,z]/(z^2 - x_1(x_1-1)(x_1-\lambda_1)x_2(x_2-1)(x_2-\lambda_2)).$$
Let $C$ be a rational curve on $\overline{E_1\times E_2}$, so the projection of $C$ onto $\bar E_i$
is given by $x_i\mapsto P_i(t)/Q_i(t)$ for $i=1,2$.
Thus $\pi^{-1}(\pi(C))$ has two irreducible components if and only if
$$\frac{P_1}{Q_1}\left(\frac{P_1}{Q_1}-1\right)\left(\frac{P_1}{Q_1}-\lambda_1\right)
\frac{P_2}{Q_2}\left(\frac{P_2}{Q_2}-1\right)\left(\frac{P_2}{Q_2}-\lambda_2\right)$$
is a square in $\bbc(t)$, i.e., if and only if $S_1 S_2$ is a square in $\bbc(t)$.
Otherwise, it has one irreducible component, and the fact that the curve $z^2-S_1S_2=0$
has genus $0$ is equivalent to the statement that there are exactly two points at which
$S_1 S_2$ has a zero or pole of odd order.

In other words, if $\pi^{-1}(\pi(C))$ has  two irreducible components then there exist distinct  $c_{1},\ldots, c_{k}\in\bbc$ such that
$$S_{1}\equiv S_{2}\equiv(t-c_{1})\ldots(t-c_{k})\pmod{\bbc(t)^{\times 2}}.$$
If it has one, then there exist distinct  $c_{1},\ldots, c_{k+1}\in\bbc$ such that
\begin{align*}
S_{1}&\equiv(t-c_{1})\ldots(t-c_{k-1})(t-c_{k}) \pmod{\bbc(t)^{\times 2}},\\
S_{2}&\equiv(t-c_{1})\ldots(t-c_{k-1})(t-c_{k+1})
\pmod{\bbc(t)^{\times 2}}.
\end{align*}
We deal with the one-component case; the two-components case works in the same way.

Fix the degrees of polynomials $P_{1}, Q_{1}, P_{2}$ and $Q_{2}$, where the pairs $P_1,Q_1$ and $P_2,Q_2$ are relatively prime.
We assume the $P_i$ and $Q_i$ are normalized so that the resultants $\Res(P_1,Q_1)$ and $\Res(P_2,Q_2)$ equal $1$.
Then there are only finitely many values $k$ such that the system of equations
\begin{equation}
\label{two equations}
\begin{split}
\begin{cases} P_{1}(t) (P_{1}(t)-Q_{1}(t))(P_{1}(t)-\lambda_{1} Q_{1}(t))Q_{1}(t)&=(t-c_{1})\ldots(t-c_{k})A(t)^{2},\\
P_{2}(t) (P_{2}(t)-Q_{2}(t))(P_{2}(t)-\lambda_{2} Q_{2}(t))Q_{2}(t)&=(t-c_{1})\ldots(t-c_{k})B(t)^{2},\\
\Res(P_1,Q_1)&=1,\\
\Res(P_2,Q_2)&=1,\\
\end{cases}
\end{split}
\end{equation}
in unknown polynomials $P_1,Q_1,P_2,Q_2,A,B\in\bbc[t]$ and unknown constants $c_1,\ldots,c_k\in\bbc$ could possibly have a solution.

The system \eqref{two equations} defines a closed subvariety in an affine space over $\bbc$ parametrizing the coefficients of $P_1,Q_1,P_2,Q_2,A,B$ and the constants $c_i$. Each closed point corresponds to a rational curve on the Kummer surface. But since there is no 1-dimensional family of rational curves on a K3 surface \cite[Chapter 13, 0.1]{H16}, this closed subvariety must be 0-dimensional. In other words, there are only finitely many rational curves if we fix the degree of $P_{1}, Q_{1}, P_{2}$ and $Q_{2}$. Since each rational curve on the Kummer surface comes from a choice of $P_{1}, Q_{1}, P_{2}$ and $Q_{2}$, there are only countably many of them.
\end{proof}

Now we fix $\lambda_{1}$ and $\lambda_{2}$, and suppose we have $S_{1}\equiv S_{2}\equiv(t-c_{1})\ldots(t-c_{k}) $. From Lemma~\ref{kummer} in the above, we know there are only countably many possibilities for $S_i\pmod{\bbc(t)^{\times 2}}$ arising from rational curves on $\overline{E_1\times E_2}$.

\begin{lemma}\label{countable}
For any fixed rational function $D\in \bbc(t)^\times$, there exist only countably many $\lambda\in\bbc$ for which there exist non-constant rational functions $\frac PQ,T\in\bbc(t)$ such that
\begin{equation}
\label{star}
\frac{P}{Q}\left(\frac{P}{Q}-1\right)\left(\frac{P}{Q}-\lambda\right)=DT^{2}.
\end{equation}
\end{lemma}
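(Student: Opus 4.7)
The plan is to show that an uncountable supply of such $\lambda$ would produce an algebraic family of sections of the Legendre elliptic surface over a positive-dimensional base, which is ruled out by rigidity of sections on a non-isotrivial elliptic surface.

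First I would reduce to the case where $D$ is not a square in $\bbc(t)$: if $D=f^2$, the equation rewrites as $(P/Q)(P/Q-1)(P/Q-\lambda)=(fT)^2$, exhibiting $(P/Q,fT)$ as a $\bbc(t)$-point of the Legendre elliptic curve $E_\lambda$ defined over $\bbc$; since by Riemann--Hurwitz every morphism $\bbp^1\to E_\lambda$ is constant, $P/Q\in\bbc$, contradicting non-constancy. Assume henceforth $D$ is not a square. Suppose for contradiction the set of $\lambda$ admitting a solution is uncountable. Stratifying by degree, the moduli of normalized quadruples $(\lambda,P,Q,T)$ of bounded degrees is a finite-type scheme over $\bbc$, whose projection to $\bba^1_\lambda$ is constructible; since any constructible subset of $\bba^1_\lambda$ is either finite or Zariski-dense, some component $V$ of such a moduli piece must dominate $\bba^1_\lambda$. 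Choosing a suitable generic point of $V$ and extending scalars, I obtain $P,Q,T\in L[t]$ for a finite extension $L/\bbc(\lambda)$ satisfying the identity in $L(t)$, with $P/Q$ still non-constant in $t$ (otherwise $P=cQ$ for some $c\in L$ would force $P_{\lambda_0}/Q_{\lambda_0}\in\bbc$ at each specialization). Let $B$ be the smooth projective curve with $\bbc(B)=L$.

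The crux of the argument: let $U$ be the smooth projective double cover of $\bbp^1_t$ defined by $u^2=D(t)$, irreducible since $D$ is not a square. Set $X=P/Q$ and $Y=Tu$ in $L(U)$; the identity yields $Y^2=T^2 D=X(X-1)(X-\lambda)$, so $(X,Y)$ is an $L(U)$-point of the Legendre elliptic curve, equivalently a section over $B\times U$ of the pullback elliptic surface $\mathcal{Y}\to B$ of the Legendre fibration along $B\to\bbp^1_\lambda$. Because $j(E_\lambda)\in\bbc(\lambda)$ is non-constant and $B\to\bbp^1_\lambda$ is non-constant, $\mathcal{Y}$ is non-isotrivial; by Lang--N\'eron the Mordell--Weil group $\mathcal{Y}(B)$ is finitely generated, hence countable. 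Since any irreducible positive-dimensional subvariety of a scheme locally of finite type over $\bbc$ has uncountably many $\bbc$-points, the section scheme of $\mathcal{Y}\to B$ is $0$-dimensional. A morphism from the connected variety $U$ into a $0$-dimensional scheme is constant, so $(X,Y)$ is independent of $u$; in particular $X=P/Q$ is independent of $t$, and specializing $\lambda$ to any $\lambda_0\in\bbc$ in the image of $B\to\bbp^1_\lambda$ forces $P_{\lambda_0}/Q_{\lambda_0}\in\bbc$, contradicting non-constancy of $P/Q$ in $t$.

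The main obstacle will be the rigidity step: one must argue cleanly that an algebraic family of sections of $\mathcal{Y}\to B$ parametrized by the connected variety $U$ must be constant. This reduces, via the identification of such families with morphisms to the section scheme, to showing that each component of this section scheme is $0$-dimensional, which follows from Lang--N\'eron together with non-isotriviality of the pulled-back Legendre surface $\mathcal{Y}\to B$.
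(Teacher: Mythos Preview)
Your proof is correct but follows a genuinely different route from the paper's. Both arguments hinge on the hyperelliptic curve $U$ defined by $u^2=D(t)$ (which the paper calls $Y$), but they exploit it in opposite directions.

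The paper argues directly: any solution $(\lambda,P/Q,T)$ of \eqref{star} yields a non-constant morphism $U\to E(\lambda)$ via $(t,u)\mapsto(P(t)/Q(t),\,T(t)u)$, so $E(\lambda)$ is a quotient of $\Jac(U)$ and hence isogenous to one of its finitely many simple factors. Since each isogeny class of elliptic curves is countable, so is the set of admissible $\lambda$. This is short and uses only classical facts about Jacobians and isogeny decompositions.

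Your argument instead assumes an uncountable supply of solutions, spreads them out to a family over a base $B$ dominating $\bba^1_\lambda$, and reinterprets the data as a $U$-parametrized family of sections of the pulled-back Legendre surface $\mathcal{Y}\to B$; Lang--N\'eron plus zero-dimensionality of the section scheme then forces this family to be constant, contradicting non-constancy of $P/Q$ in $t$. This is heavier in machinery (Hom/section schemes, Lang--N\'eron, constructibility and spreading out), but the framework would transport to other non-isotrivial one-parameter families where a direct Jacobian argument is less transparent. Two small points worth tightening: when you pass from the component $V$ to ``a finite extension $L/\bbc(\lambda)$'' you are implicitly replacing $V$ by an irreducible curve in $V$ that still dominates $\bba^1_\lambda$ (which exists, but should be said); and the induced map $B\times U\dashrightarrow\mathcal{Y}$ is a priori only rational, so the resulting morphism to the section scheme is defined only on a dense open---still connected---subset of $U$, which suffices.
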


\begin{proof}
Suppose that $\lambda\not\in\{0,1\}$ and polynomials $P,Q$ and a rational function $T$ exist satisfying \eqref{star}.  Let $E(\lambda)$ denote the elliptic curve whose affine model is given by $y^{2}=x(x-1)(x-\lambda)$ and define
\[
X\colon u^{2}=\frac{P(v)}{Q(v)}\left(\frac{P(v)}{Q(v)}-1\right)\left(\frac{P(v)}{Q(v)}-\lambda\right).
\]
The key observation here is that the function field of $X$ is independent of the choice of $P,Q,T$. Hence, the complete non-singular curve $Y$ with the same function field as $X$
depends only on $\lambda$ and on the coset $D\bbc(t)^{\times 2}$.

Indeed, it is clear that $u_1^2 = T_1^2D(v_1)$ is birationally equivalent to $u_2^2 = T_2^2 D(v_2)$, so if
$(\lambda_{1},P_{1},Q_{1},T_{1})$ and $(\lambda_{2},P_{2},Q_{2},T_{2})$ both satisfy \eqref{star}, then
\[
X_1\colon u_{1}^{2}=\frac{P_{1}(v_{1})}{Q_1(v_{1})}\left(\frac{P_{1}(v_{1})}{Q_{1}(v_{1})}-1\right)\left(\frac{P_{1}(v_{1})}{Q_{1}(v_{1})}-\lambda\right)
\]
and
\[
X_2\colon u_{2}^{2}=\frac{P_{2}(v_{2})}{Q_2(v_{2})}\left(\frac{P_{2}(v_{2})}{Q_{2}(v_{2})}-1\right)\left(\frac{P_{2}(v_{2})}{Q_{2}(v_{2})}-\lambda\right)
\]
are birationally equivalent.

Now since $(v,u)\mapsto(P(v)/Q(v),u)$ defines a non-constant map $X\to E(\lambda)$, we also have a non-constant morphism $Y\to E(\lambda)$. By the universal property of the Jacobian variety, this map factors through $\Jac(Y)$. Hence there exists a surjective homomorphism $\Jac(Y)\rightarrow E(\lambda)$. For the abelian variety $\Jac(Y)$, there exists a product of simple abelian varieties $A_{1}\times\ldots\times A_{r}$ which is isogeneous to $\Jac(Y)$. So there is a surjective homomorphism $A_{1}\times\ldots\times A_{r}\rightarrow E(\lambda)$. Since this homomorphism is non-constant, there must be at least one factor $A_{i}$ such that when we restrict this map to $A_{i}$, we get a non-constant homomorphism. But since $A_{i}$ is simple, $A_{i}$ will also be an elliptic curve. Hence $E(\lambda)$ is isogeneous to $A_{i}$.

What we have just proved in the above is that if we fix the mod $2$ divisor $D$, then it determines a unique smooth curve $Y$. Suppose $\Jac(Y)$ is isogeneous to $A_{1}\times\ldots\times A_{r}$. If $(\lambda,P,Q,T)$ satisfies \eqref{star}, then $E(\lambda)$ must be isogeneous to one of the elliptic curve factors in $A_{1}\times\ldots\times A_{r}$. But since we know that there are only countably many elliptic curves isogenous to a given elliptic curve, we deduce that there are only countably many $\lambda$ corresponding to the given $D$.
\end{proof}

By Lemma \ref{countable} and the remark after Lemma \ref{kummer}, if we fix $\lambda_{1}$ and $\lambda_{2}$, then there are only countably many $\lambda_{3}$ such that $S_{1}\equiv S_{2}\equiv S_{3}$ for some $P_{1},Q_{1},P_{2},Q_{2}$ and $P_{3},Q_{3}$.

Now we can prove the following lemma.

\begin{lemma} \label{3E}
For a very general $(\lambda_1, \lambda_2, \lambda_3)$ in ${{{\bbc}^{3}}}$, there does not exist a diagonal rational curve in the Kummer variety $\overline{E_{1}\times E_{2}\times E_{3}}$ such that $S_{1}\equiv S_{2}\equiv S_{3}$ in ${{\bbc(t)^{\times}/\bbc(t)^{\times 2}}}$.
\end{lemma}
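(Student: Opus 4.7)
The plan is to promote the pointwise countability that has just been established---for any fixed $\lambda_1,\lambda_2$ there are only countably many $\lambda_3$ for which a diagonal rational curve with $S_1\equiv S_2\equiv S_3$ exists---into a ``very general'' statement on $\bbc^3$, via a stratification into algebraic families combined with the fiber-dimension theorem.

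\textbf{Stratification.} Any diagonal rational curve $C$ on $\overline{E_1\times E_2\times E_3}$ with $S_1\equiv S_2\equiv S_3\pmod{\bbc(t)^{\times 2}}$ is encoded, after normalizing the parametrization $t$ of $C\simeq\bbp^1$ and the pairs $(P_i,Q_i)$ for $i=1,2,3$ (for instance by imposing $\Res(P_i,Q_i)=1$ as in the proof of Lemma~\ref{kummer} together with a choice of leading coefficient), by a finite tuple of polynomials and distinct constants $c_1,\ldots,c_k$ satisfying the three-factor analogue of the system \eqref{two equations}; the two-component case is handled by the corresponding parallel system. Fix the discrete data $\alpha$ consisting of the degrees of these polynomials and the integer $k$. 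There are countably many choices of $\alpha$, and for each one the system is polynomial, so its solution set together with $(\lambda_1,\lambda_2,\lambda_3)$ forms a quasi-affine variety $\mathcal V_\alpha$ over $\bbc$.

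\textbf{Fiber dimension.} Let $B_\alpha\subset\bbc^3$ denote the image of $\mathcal V_\alpha$ under projection to the $\lambda$-coordinates; it is constructible by Chevalley's theorem. Consider the further projection $B_\alpha\to\bbc^2$ onto the first two coordinates. The observation quoted above shows that every fiber of this map is countable, and since a countable constructible subset of $\bbc$ is finite, these fibers are $0$-dimensional. By upper-semicontinuity of fiber dimension we conclude that $\dim B_\alpha\le 2$, so $\overline{B_\alpha}$ is a proper closed subvariety of $\bbc^3$. The set of $(\lambda_1,\lambda_2,\lambda_3)$ for which a diagonal rational curve with $S_1\equiv S_2\equiv S_3$ exists is thus contained in the countable union $\bigcup_\alpha \overline{B_\alpha}$ of proper closed subvarieties, so a very general point of $\bbc^3$ lies in the complement, and the lemma follows.

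\textbf{Main obstacle.} The substantive work has been carried out in Lemma~\ref{kummer} and Lemma~\ref{countable}; what remains is confirming that the stratification genuinely exhausts the possibilities. This requires checking that a parametrization of $C\simeq\bbp^1$ can be normalized so that the data $(P_i,Q_i,\ldots)$ varies in a $\bbc$-algebraic family, and that the two-component and one-component systems are treated in parallel. Both are routine bookkeeping once the shape of \eqref{two equations} is extended from two factors to three.
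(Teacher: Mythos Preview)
Your proof is correct and follows essentially the same route as the paper: stratify by discrete degree data, define for each stratum an affine solution variety, project to $\bbc^3$ via Chevalley, and show the image has dimension $\le 2$ by projecting further to $\bbc^2$ and invoking the countability established after Lemma~\ref{countable}. The only cosmetic differences are that the paper encodes the common square class by a single polynomial $D(t)$ rather than by constants $c_1,\ldots,c_k$, and it phrases the dimension bound as a direct contradiction (a $3$-dimensional constructible set contains a Zariski open, hence has an uncountable fiber over $\bbc^2$) rather than via the fiber-dimension theorem; note also that since the hypothesis here is already $S_1\equiv S_2\equiv S_3$, only the ``two-component'' case is relevant.
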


\begin{proof}
Consider a diagonal rational curve in the Kummer variety $\overline{E_{1}\times E_{2}\times E_{3}}$ such that $S_{1}\equiv S_{2}\equiv S_{3}$. Then we have
\[
P_i(t) (P_i(t)-Q_i(t))(P_i(t)-\lambda_i Q_i(t))Q_i(t)=D(t)T_{i}^{2}(t)
\]
for $i=1,2,3$.

Fix the degree of $P_{i}(t),Q_{i}(t),D(t)$, and regard the coefficients of $P_{i},Q_{i},T_{i},D$ and $\lambda_{i}$ as variables. These equations define a closed subvariety $Z$ in an affine space over ${\bbc}$. Consider the projection map $\pi$ from the affine space to ${{{\bbc}^{3}}}$, which corresponds to the three coordinates $\lambda_{1},\lambda_{2},\lambda_{3}$. Then the image of $Z$ under $\pi$ will be a constuctible set, which is a finite union of locally closed subsets. We want to claim that each locally closed subset has dimension $\leq 2$. Then this will imply that all the tuples $(\lambda_{1},\lambda_{2},\lambda_{3})$ which admits the above diagnal rational curve are contained in countably many proper closed subset of ${{{\bbc}^{3}}}$.

We prove the claim by contradiction. Suppose there is a locally closed subset $W$ of dimension {$3$}. Then it is an open dense subset of ${{{\bbc}^{3}}}$. Consider the projection map $p:{{{\bbc}^{3}}}\rightarrow{{{\bbc}^{2}}}$ to the first two coordinates. Then there must exist a fiber which has a non-empty intersection with $W$. But this intersection contains uncountably many points, which contradicts the remark after Lemma {{\ref{countable}}}.
\end{proof}

Now we are ready to prove Theorem \ref{diagonal}.

\begin{proof}[The proof of Theorem~\ref{diagonal}.]
If $n\geq 7$ and there is a diagonal rational curve on the Kummer variety, then by the {{observation after Theorem \ref{diagonal}}}, we know that there must be three coordinates $i,j,k$ for which $S_{i},S_{j}$ and $S_{k}$ are the same in ${{\bbc(t)^{\times}/\bbc(t)^{\times 2}}}$. That implies $(\lambda_{i},\lambda_{j},\lambda_{k})$ is contained in countably many proper closed subsets of ${{{\bbc}^{3}}}$ by Lemma ${{\ref{3E}}}$. Hence this $(\lambda_{1},\ldots,\lambda_{n})$ is contained in countably many proper closed subsets of ${{{\bbc}^{n}}}$. Since there are only finitely many way to choose three indices from $\{1,\dots,n\}$, any tuple $(\lambda_{1},\ldots,\lambda_{n})$ which admits a diagonal rational curve is also contained in countably many proper closed subset in ${{{\bbc}^{n}}}$. Hence for a very general $(\lambda_{1},\ldots,\lambda_{n})$ in ${{{\bbc}^{n}}}$, there is no diagonal rational curve on the corresponding Kummer variety.
\end{proof}

\section{Families of Abelian Varieties}

Let $S$ denote a variety over $\bbc$ and $\pi\colon A\to S$ an abelian scheme over $S$ of relative dimension $g$.  Let $X$ be a reduced closed subscheme of $A$ which is symmetric in the sense that $X = -X$.

\begin{lem}
\label{constructibility}
The condition on $s\in S$ that the fiber $X_s$ generates the abelian variety $A_s$ is constructible.
\end{lem}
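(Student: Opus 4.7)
The plan is to realize the closed subgroup of $A_s$ generated by $X_s$ as the fiber of a single closed subscheme $Z\subseteq A$, uniformly in $s$, and then invoke upper semicontinuity of fiber dimension. First, I replace $X$ by $X\cup e(S)$, where $e\colon S\to A$ is the identity section; the enlargement is still closed, reduced, and symmetric, and it generates the same closed subgroup on each fiber, so I may assume $e(S)\subseteq X$.

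For each $n\geq 1$, consider the $n$-fold $S$-addition map
\[
\sigma_n\colon X\times_S\cdots\times_S X\to A.
\]
Since $X\to S$ is proper (it is a closed subscheme of the proper abelian scheme $A\to S$), the source is proper over $S$, and $\sigma_n$ is proper because $A\to S$ is separated; thus its image $Z_n\subseteq A$ is closed. Being an $S$-morphism, $\sigma_n$ has set-theoretic fiber
\[
(Z_n)_s \;=\; X_s^{+n} \;:=\; X_s+\cdots+X_s \quad (n \text{ summands}).
\]
Because $0\in X_s$ for every $s$, the inclusions $Z_1\subseteq Z_2\subseteq\cdots$ form an ascending chain of closed subsets of the Noetherian space $A$, which stabilizes at some $Z_N=Z_{N+1}=\cdots=:Z$ for a single integer $N$. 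On fibers, $Z_s=X_s^{+N}=X_s^{+(N+1)}=Z_s+X_s$, which together with $X_s=-X_s$ and $0\in X_s$ forces $Z_s$ to be a closed subgroup of $A_s$ containing $X_s$; it is evidently the smallest such.

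Therefore $X_s$ generates $A_s$ if and only if $Z_s=A_s$, equivalently $\dim Z_s\geq g$, where $g$ is the relative dimension of $A/S$. Upper semicontinuity of fiber dimension for the proper morphism $Z\to S$ then shows that $\{s\in S:\dim Z_s\geq g\}$ is closed in $S$, and in particular constructible (in fact closed, which is strictly stronger than claimed). The principal obstacle is ensuring that the stabilization index $N$ is uniform in $s$: a fiberwise argument would only produce an $s$-dependent bound. The key move is to stabilize inside the total space $A$, where Noetherianity supplies a single $N$ valid on every fiber at once; a minor secondary point, automatic from $\sigma_n$ being an $S$-morphism, is the identification of $(Z_n)_s$ with $X_s^{+n}$.
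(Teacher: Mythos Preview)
Your argument has a genuine gap: you assert that the ascending chain $Z_1\subseteq Z_2\subseteq\cdots$ of closed subsets stabilizes because $A$ is Noetherian, but a Noetherian topological space satisfies the \emph{descending} chain condition on closed subsets, not the ascending one. Ascending chains of closed sets need not terminate; already in $\mathbb{A}^1$ the chain $\{0\}\subsetneq\{0,1\}\subsetneq\{0,1,2\}\subsetneq\cdots$ is infinite. In the present setting this actually bites: take $S=\mathbb{A}^1$, $A=E\times S$ for an elliptic curve $E$, and $X=e(S)\cup\{(P,0),(-P,0)\}$ with $P\in E$ of infinite order. Then $X_0^{+n}=\{-nP,\ldots,nP\}$ has $2n+1$ points, so $(Z_n)_0$ strictly grows with $n$ and the chain $Z_n$ never stabilizes. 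Note that here $X_0$ \emph{does} generate $A_0=E$ in the sense that the smallest closed subgroup containing it is $E$, yet no finite sumset $X_0^{+n}$ equals $E$; so there is no hope of realizing the generated subgroup as a fiber of some fixed $Z_N$ by this route.

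The paper avoids this by invoking Borel's result (Proposition~I.2.2 in \cite{Borel}) that for a closed irreducible subset containing the identity, the closed subgroup it generates is already the $2g$-fold product. This supplies a uniform bound $N=2g$ depending only on $g=\dim A_s$, which your Noetherian appeal cannot. With that in hand one argues via Chevalley: the generating locus is the complement of $\pi\bigl(A\setminus\mu(X^{2g})\bigr)$, hence constructible. Your fiber-dimension semicontinuity step would also go through once $N=2g$ is known (and would in fact give closedness, since $\pi$ is flat hence open), but the essential input is Borel's dimension bound, not Noetherianity of the total space.
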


\begin{proof}
By \cite[I~Proposition 2.2]{Borel}, $X_s$ generates $A_s$ if and only if the multiplication morphism
$$\mu_s\colon \underbrace{X_s\times \cdots\times X_s}_{2g}\to A_s$$
is surjective.
If
$$\mu\colon X^{2g} := \underbrace{X\times_S \cdots\times_S X}_{2g}\to A$$
is the fiberwise multiplication morphism, then the set of $s\in S$ such that the fiber $X_s$ generates the abelian variety $A_s$
is the complement of $\pi(A\setminus \mu(X^{2g}))$, and is therefore constructible by Chevalley's theorem.
\end{proof}

\begin{defn}
A curve $C$ on the Kummer variety $\bar A$ associated to an abelian variety $A$ is  \emph{nondegenerate} if the hyperelliptic preimage of $C$ in $A$ generates the abelian variety $A$. \end{defn}

\begin{lem}
\label{diagonal vs nondegenerate}
If $E_1,\ldots,E_n$ are pairwise non-isogenous elliptic curves over $\bbc$, a rational curve on the Kummer variety of $A=E_1\times \cdots\times E_n$ is diagonal if and only if it is nondegenerate.
\end{lem}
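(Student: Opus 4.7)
The plan is to combine two ingredients: any morphism $\bbp^1\to A$ is constant, and under the pairwise non-isogeny hypothesis every abelian subvariety of $A$ is a subproduct $\prod_{i\in I}E_i$. The first will force the hyperelliptic preimage $\tilde C$ of $C$ (under the quotient $\pi\colon A\to\bar A$) to be irreducible; the second will rigidly constrain any proper closed subgroup that might contain $\tilde C$. Given these, both directions of the equivalence reduce to short contrapositive arguments.

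First I would check irreducibility of $\tilde C$. The restriction $\pi|_{\tilde C}\colon\tilde C\to C\cong\bbp^1$ has degree $2$; were $\tilde C$ to split into two components, each would map birationally onto $C$, producing a non-constant morphism $\bbp^1\to A$, which is impossible. Next I would record the structural consequence of pairwise non-isogeny: for any abelian subvariety $B^0\subseteq A$, each projection $B^0\to E_i$ is either zero or surjective (as $E_i$ is simple), and for $I=\{i:p_i(B^0)=E_i\}$ the induced isogeny $B^0\to\prod_{i\in I}E_i$ has trivial kernel (since $B^0\cap\prod_{i\notin I}E_i=0$), hence $B^0=\prod_{i\in I}E_i$.

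For the direction ``nondegenerate $\Rightarrow$ diagonal'', I would argue contrapositively: suppose $C\to\bar E_i$ is constant with image $\bar P_i$. Then $p_i(\tilde C)\subseteq\{P_i,-P_i\}$, and by irreducibility this image is a single point; the symmetry $\tilde C=-\tilde C$ then forces $P_i=-P_i$, so $P_i$ is $2$-torsion. Hence $\tilde C$ sits in the proper closed subgroup $\bigl(\prod_{j\ne i}E_j\bigr)\times\{0,P_i\}$ of $A$, so $\tilde C$ cannot generate $A$. For the reverse direction, suppose contrapositively that the closed subgroup $B$ generated by $\tilde C$ is proper. Then $B^0=\prod_{i\in I}E_i$ with $I\subsetneq\{1,\ldots,n\}$, and $B$ has only finitely many components, each a translate of $B^0$; the irreducible $\tilde C$ lies in a single such coset $B^0+a$. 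For any $i\notin I$ the projection of $B^0+a$ to $E_i$ is the single point $p_i(a)$, so $C\to\bar E_i$ is constant, contradicting diagonality.

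The main pitfall to watch for is that, without the irreducibility of $\tilde C$, a constant projection $C\to\bar E_i$ could in principle lift to a non-torsion $P_i$, in which case the Zariski closure of $\langle P_i\rangle$ is all of $E_i$ and the naive subgroup containment would break down. It is precisely the absence of rational curves in an abelian variety that rules this out, so I would verify that reduction carefully before invoking the rest of the argument.
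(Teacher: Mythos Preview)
Your outline is sound and tracks the paper's proof closely: both arguments rest on (i) irreducibility of the preimage $\tilde C$, forced by the absence of rational curves on $A$, and (ii) the structural fact that, under pairwise non-isogeny, a proper abelian subvariety of $A$ must project trivially to some factor $E_i$. You also make the $2$-torsion step in the ``not diagonal $\Rightarrow$ degenerate'' direction more explicit than the paper does, correctly identifying the symmetry $\tilde C=-\tilde C$ as what pins the constant projection to a torsion point.

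There is one step you should tighten. In arguing that every abelian subvariety $B^0\subseteq A$ is a subproduct, you write that the map $B^0\to\prod_{i\in I}E_i$ is an \emph{isogeny} with trivial kernel, hence an equality. But you have only established injectivity: the parenthetical ``since $B^0\cap\prod_{i\notin I}E_i=0$'' does not use the non-isogeny hypothesis at all, and without it the conclusion is false (think of the diagonal in $E\times E$, where both projections are surjective yet the inclusion is not onto). Surjectivity is precisely where pairwise non-isogeny enters: decompose $B^0$ up to isogeny into simple factors; each $E_i$ with $i\in I$ must be isogenous to one of them, and non-isogeny forces these to be distinct, so $\dim B^0\ge|I|$ and the injection is an equality. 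The paper reaches the same endpoint by a Goursat-lemma argument on the minimal subproduct onto which $B^0$ fails to surject, which is essentially your structural claim proved one factor at a time rather than stated globally. Either route works once this gap is filled.
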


\begin{proof}
Let $X$ denote the inverse image in $A$ of a rational curve $C$ in $\bar A$.  As $C$ is rational and $A$ does not contain any rational curve,
$X$ must consist of a single irreducible (hyperelliptic) component. 
If $C$ is not diagonal, then the image of $C$ in some $\bar E_i$ is a single point, so the image of $X$ in $E_i$ is finite (in fact, a single point).
implying that $X$ does not generate $A$.  

Conversely, if $C$ is degenerate, then $X$ does not generate $A$.  Fixing a translate $Y$ of $X$ which contains $0$, also $Y$ does not generate $A$.
Let $B$  denote the abelian subvariety of $A$ generated by $Y$.  Let $m$ be the smallest integer such that the projection $\pi$ from $B$ to the product of some $m$ factors $E_i$ of $A$
fails to be surjective.  Without loss of generality, we may assume that these factors are $E_1,\ldots,E_m$.  Suppose $m\ge 2$.
By Goursat's Lemma,  there exists an isomorphism between a non-trivial quotient of $E_1\times \cdots\times E_{m-1}$ and a non-trivial quotient $E'_m$ of $E_m$, which is necessarily an elliptic curve isogenous to $E_m$.  This gives a surjective morphism $E_1\times \cdots\times E_{m-1}\to E'_m$, whose restriction to some factor $E_i$ must be non-trivial.  This contradicts the fact that $E_i$ and $E_m$ are not isogenous.  Thus $m=1$, implying that the morphism $B\to E_1$ is trivial, which implies $X\to E_1$ is constant, so $C$ is non-diagonal.

\end{proof}

\begin{lem}
\label{surjection}
If $\pi\colon A\to B$ is a surjective morphism of abelian varieties and $\bar A$ contains a non-degenerate rational curve $C$, then $\bar B$ contains a non-degenerate rational curve as well.
\end{lem}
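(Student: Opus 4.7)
The plan is to take $C' := \bar\pi(C)$ as the candidate non-degenerate rational curve in $\bar B$, where $\bar\pi\colon \bar A\to \bar B$ is the morphism of Kummer varieties induced by $\pi$ (well-defined since $\pi$ is a homomorphism, hence commutes with inversion). Writing $q_A\colon A\to \bar A$ and $q_B\colon B\to \bar B$ for the quotient maps and $X := q_A^{-1}(C)$, the hypothesis that $C$ is non-degenerate says that $X$ is a symmetric closed subscheme of $A$ that generates $A$ as an abelian variety. From the commutative square $\bar\pi\circ q_A = q_B\circ \pi$ and the identity $q_B^{-1}(q_B(Y)) = Y \cup (-Y)$ for $Y\subset B$, one obtains
\[
q_B^{-1}(C') = \pi(X)\cup(-\pi(X)) = \pi(X),
\]
the second equality using $X = -X$.

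The heart of the argument is to verify that $\pi(X)$ generates $B$. I would argue this directly: if $B'\subset B$ is any closed algebraic subgroup containing $\pi(X)$, then $\pi^{-1}(B')$ is a closed algebraic subgroup of $A$ containing $X$, so non-degeneracy of $C$ forces $\pi^{-1}(B') = A$, and surjectivity of $\pi$ then gives $B' = B$. In particular, $\pi(X)$ cannot lie in a zero-dimensional subgroup of $B$, so assuming $\dim B > 0$ (otherwise the conclusion is vacuous), $\pi(X)$ is positive-dimensional, and being dominated by the one-dimensional $X$ it is a curve. Consequently $C' = q_B(\pi(X))$ is a one-dimensional closed subvariety of $\bar B$.

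Finally, $C'$ is irreducible and rational as the image of the irreducible rational curve $C$ under the morphism $\bar\pi$ (by L\"uroth), and its hyperelliptic preimage in $B$ is precisely $\pi(X)$, which generates $B$ by the previous step. Hence $C'$ is a non-degenerate rational curve in $\bar B$. The only substantive point to guard against is that $\bar\pi$ could collapse $C$ to a point, and this is exactly what is ruled out by combining non-degeneracy of $C$ with the surjectivity of $\pi$; the remaining verifications are formal unpackings of the definitions and the commutative square above.
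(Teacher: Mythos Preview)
Your proof is correct and follows essentially the same route as the paper's: induce $\bar\pi\colon \bar A\to\bar B$, take $C'=\bar\pi(C)$, identify its hyperelliptic preimage with $\pi(X)$, and use that $\pi(X)$ generates $B$ to rule out collapse to a point. Your write-up is in fact more careful than the paper's in two places---you explicitly justify $q_B^{-1}(C')=\pi(X)$ via the commutative square and symmetry of $X$, and you spell out why $\pi(X)$ generates $B$ (the paper simply asserts ``As $X$ generates $A$, $Y$ generates $B$'')---and you correctly flag the degenerate case $\dim B=0$.
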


\begin{proof}
The morphism $\pi$ determines a morphism $\bar \pi \colon \bar A\to \bar B$.  Let $X\subset \bar A$ denote the inverse image of $C$ and $Y := \pi(X)$.  As $\pi$ is proper, $Y$ is closed; it is also connected and of dimension $\le 1$, so it is either a point or a curve.
As $X$ generates $A$, $Y$ generates $B$, so $Y$ is a curve.  The image of $Y$ in $\bar B$ is $\bar \pi(C)$.  Any $1$-dimensional image of a rational curve is again a rational curve.
\end{proof}

\begin{lem}
\label{genus 0}
Let $S$ be a variety over $\bbc$ and $\pi\colon C\to S$ a projective morphism.  The set of points $s\in S$ such that the fiber $C_s$ is a curve of genus $0$ (in particular, geometrically integral) is
constructible.
\end{lem}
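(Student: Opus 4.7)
The plan is to proceed by Noetherian induction on $S$, using the criterion that a subset $G$ of a Noetherian scheme is constructible if and only if for every irreducible closed subvariety $T$ there is a dense open $V\subset T$ with $V\cap G\in\{V,\emptyset\}$. Let $G\subset S$ denote the set in question, and fix an irreducible closed $T\subset S$ with its reduced induced structure.

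By generic flatness (EGA IV, 6.9.1), there is a dense open $U\subset T$ on which $\pi_U\colon C\times_S U\to U$ is flat. Since $T$ is irreducible, so is $U$, and the Hilbert polynomial of the fibers of $\pi_U$ (with respect to any relatively ample bundle for $\pi$) is constant on $U$. In particular the fiber dimension is a fixed integer $d$ on $U$; if $d\neq 1$ then $G\cap U=\emptyset$, and we are done with this stratum, so assume $d=1$. Next, the locus in $U$ where $C_s$ is geometrically integral is constructible by EGA IV, 9.7.7, and in the flat projective setting it is open; after shrinking $U$ I may assume either that every fiber over $U$ is geometrically integral of dimension $1$, or that no fiber over $U$ is (in the latter case $G\cap U=\emptyset$ and we are done).

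In the geometrically integral case, flatness of $\pi_U$ forces $\chi(\cO_{C_s})$ to be constant on $U$ (EGA III, 7.9.4), so the arithmetic genus $p_a(C_s)=1-\chi(\cO_{C_s})$ is constant on $U$. A geometrically integral projective curve with $p_a=0$ must be smooth and rational, hence isomorphic to $\P^1$, which is the meaning of a curve of genus $0$. Thus either $U\subset G$ or $U\cap G=\emptyset$, giving the required dense open $V=U$. Applying the criterion to the proper closed subset $T\setminus U$ terminates the induction. The main subtlety lies in ruling out that wild behavior of the fibers where $\pi$ fails to be flat could spoil constructibility, but this is confined to a proper closed subset of each $T$ and is absorbed by the Noetherian induction; all of the individual inputs (generic flatness, constructibility of geometric integrality, constancy of Euler characteristic in flat families) are standard.
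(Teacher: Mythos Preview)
Your argument follows the same overall architecture as the paper's proof---Noetherian induction via generic flatness, then cutting down to the locus of geometrically integral one-dimensional fibers---but diverges at the final step, and that divergence is a genuine gap.

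You conclude by invoking constancy of the \emph{arithmetic} genus $p_a=1-\chi(\cO_{C_s})$ on $U$ and then assert that ``curve of genus $0$'' means $p_a=0$, i.e.\ $C_s\cong\P^1$. In the paper's context (see the application in Proposition~\ref{family}, where the lemma is used to stratify a Hilbert scheme so as to parametrize \emph{rational} curves on Kummer varieties), ``genus $0$'' means \emph{geometric} genus $0$: a possibly singular, geometrically integral curve whose normalization is $\P^1$. Under that reading your dichotomy ``either $U\subset G$ or $U\cap G=\emptyset$'' fails. If the constant arithmetic genus on $U$ is some $p_a>0$, individual fibers can have any geometric genus between $0$ and $p_a$: for instance, in a flat family of plane cubics every fiber has $p_a=1$, yet the nodal or cuspidal fibers lie in $G$ while the smooth fibers do not. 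So knowing $p_a$ alone does not decide membership in $G$ uniformly over $U$.

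What is missing is exactly the input the paper supplies: lower semicontinuity of the \emph{geometric} genus in flat projective families of integral curves (the paper cites Diaz--Harris \cite{DH88}). With that, the locus of geometric genus $0$ inside your $U$ is closed, and the Noetherian induction goes through. Your proof as written is correct only for the stronger condition ``$C_s\cong\P^1$'', which is not what is used downstream.
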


\begin{proof}
By \cite[Th\'eor\`eme~11.1.1]{G66} and Noetherian induction, there exists a finite stratification of $S$ such that $\pi$ is flat over every stratum.  Therefore, without loss of generality, we may assume it is flat.  By \cite[Proposition~9.9.4(iv)]{G66} the subset of $s$ for which $C_s$ is geometrically reduced is constructible, so we may pass to the strata of a finite stratification and assume that
$\pi$ has geometrically reduced fibers.  By \cite[Corollaire~9.7.9]{G66}, the subset of $s$ for which $C_s$ is geometrically irreducible is constructible, so again we may assume this is true for every fiber of $\pi$.  Finally, for flat families of projective curves, geometric genus is lower semicontinuous \cite[Proposition~2.4]{DH88}, so the locus of genus $0$ curves is closed.
\end{proof}

\begin{prop}\label{family}
Suppose we have an abelian scheme $A$ of dimension $g$ over an irreducible base variety $S/\bbc$. Let $\bar A\to S$ be the associated family of Kummer varieties. Then one of the following must hold:
\begin{enumerate}
\item There exists a non-empty Zariski open subset $U\subset S$ such that for every $s\in U$, $\bar A_s$ has a nondegenerate rational curve.
\item  For a very general Kummer variety $\bar A_s$, there is no nondegenerate rational curve.
\end{enumerate}
\end{prop}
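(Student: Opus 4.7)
The plan is to parametrize rational curves appearing on fibers $\bar A_s$ by the relative Hilbert scheme and to reduce the proposition to Chevalley's theorem combined with the two preceding lemmas. Let $H := \Hilb(\bar A/S)$, which decomposes as a countable disjoint union $H = \bigsqcup_P H^P$ indexed by Hilbert polynomial, with each $H^P\to S$ of finite type. Write $\mathcal{C}\subset \bar A\times_S H$ for the universal closed subscheme, and let $\mathcal{X}\subset A\times_S H$ be the reduced scheme underlying the pullback of $\mathcal{C}$ along the quotient morphism $A\to \bar A$. Then $\mathcal{X}$ is a reduced symmetric closed subscheme of the abelian scheme $A\times_S H\to H$, and the set-theoretic fiber of $\mathcal{X}$ over $h\in H$ is precisely the hyperelliptic preimage in $A_{\pi(h)}$ of the curve $\mathcal{C}_h\subset \bar A_{\pi(h)}$.

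Within each $H^P$, I would cut out the locus $Z^P\subset H^P$ of points $h$ at which both (i) $\mathcal{C}_h$ is a curve of geometric genus $0$, and (ii) $\mathcal{X}_h$ generates $A_{\pi(h)}$. Lemma~\ref{genus 0} shows that condition (i) defines a constructible subset of $H^P$, while Lemma~\ref{constructibility}, applied to the abelian scheme $A\times_S H^P\to H^P$ together with the reduced symmetric subscheme $\mathcal{X}|_{H^P}$, shows the same for (ii). Thus $Z^P$ is constructible in $H^P$, and by Chevalley's theorem its image $T_P := \pi(Z^P)$ is a constructible subset of $S$. By the universal property of the Hilbert scheme, every rational curve on any fiber $\bar A_s$ appears as $\mathcal{C}_h$ for some $h\in H$, so the locus $T\subset S$ of points $s$ at which $\bar A_s$ admits a nondegenerate rational curve equals $\bigcup_P T_P$, a countable union of constructible subsets of $S$.

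The proposition then follows from a dichotomy. If some $T_P$ is Zariski-dense in the irreducible variety $S$, then, being constructible and dense, $T_P$ contains a non-empty Zariski open subset of $S$, giving conclusion (1). Otherwise the Zariski closure of each $T_P$ is a proper closed subset of $S$, so $T$ lies in a countable union of proper closed subsets of $S$, and its complement is very general, giving conclusion (2). The main technical obstacle I anticipate is the careful bookkeeping around the scheme structure on $\mathcal{X}$ so that Lemma~\ref{constructibility} strictly applies: because the generation condition depends only on the underlying set of $\mathcal{X}_h$, one can first stratify $H^P$ into finitely many locally closed pieces on which $\mathcal{X}$ is flat (so that passing to reduced structure commutes with taking fibers), replace $\mathcal{X}$ with its reduction on each stratum, and conclude the required constructibility without altering the image $T_P$ in $S$.
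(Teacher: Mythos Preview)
Your proof is correct and follows essentially the same approach as the paper: parametrize curves via the relative Hilbert scheme $\Hilb_P(\bar A/S)$, use Lemma~\ref{genus 0} to cut out the genus-$0$ locus and Lemma~\ref{constructibility} to cut out the nondegeneracy locus, then apply Chevalley and the dense-constructible-contains-open dichotomy. The only cosmetic difference is that the paper first stratifies by the genus-$0$ condition to obtain subvarieties $V$ and then applies the generation lemma on each $V$, whereas you intersect the two constructible conditions directly on $H^P$; your extra care about the scheme structure of $\mathcal{X}$ is harmless but not strictly needed, since the generation condition in Lemma~\ref{constructibility} depends only on the underlying set of each fiber.
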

\begin{proof}
It suffices to divide the rational curves $C_V$ on fibers of $S$ into countably many families indexed by  schemes $V$ of finite type :
$$\xymatrix{
X_V\ar@{^{(}->}[r]\ar[dd]&A_V\ar[dr]\ar[dd]&\\
&&A\ar[dd]\\
C_V\ar@{^{(}->}[r]\ar[ddr]&\bar A_V\ar[dr]\ar[dd]&\\
&&\bar A\ar[dd]\\
&V\ar^(0.4){\pi_V}[dr]&\\
&&S}$$
By Lemma~\ref{constructibility}, the set $Z_V$ of $v\in V$ such that $(C_V)_v$ is nondegenerate
in $\bar A_{\pi_V(v)}$ (i.e., $X_{\pi_V(v)}$ generates $A_{\pi_V(v)}$) is constructible, so $\pi_V(Z_V)$ is a constructible subset of $S$.
If $\pi_V(Z_V)$ contains the generic point $\eta$ of $S$, then it contains a dense open subset of $S$,
so condition (1) holds.
Otherwise, for each $s$ in the complement of the countable union $\bigcup_V \overline{\pi_V(Z_V)}$,
the Kummer variety $\bar A_s$ has no nondegenerate rational curve, which implies condition (2).

To construct these families of rational curves,
we observe that every nondegenerate rational curve $C$ in a Kummer variety $\bar A_s$, corresponds to a point $[C]$ in a relative Hilbert scheme $\Hilb_{P}(\bar A/S)$ for some Hilbert polynomial $P$. Denote the structure map by $\pi\colon \Hilb_{P}(\bar A/S)\to S$. By Lemma~\ref{genus 0}, $[C]$ is actually contained ${{in}}$ some subvariety $V\subset \Hilb_{P}(\bar A/S)$ which is a stratum of $\Hilb_{P}(\bar A/S)$
parametrizing irreducible and reduced rational curves in $\bar A$.  We denote by $\pi_V$ the restriction of $\pi$ to $V$.

Define $\bar A_V := \bar A\times_S V$ and $A_V := A\times_S V$.  The quotient $S$-morphism $A\to \bar A$ defines a quotient $V$-morphism $A_V\to \bar A_V$.
By definition of the Hilbert scheme, $\bar A_V$ contains a closed subscheme $C_V$ which is flat over $V$ and such that each fiber $(C_V)_v\subset (\bar A_V)_v = \bar A_{\pi(v)}$ is a rational curve with Hilbert polynomial $P$.  Let $X_V:= C_V\times_{\bar A_V} A_V$, so the fiber of $X_V$ over $v$ is a hyperelliptic curve in $A_{\pi(v)}$.
\end{proof}

\section{Some Families of Unitary Type}

A general reference for the construction in this section is \cite[Chapter XXIV]{Shimura}.

Let $K$ be an imaginary quadratic field with ring of integers $\cO = \bbz + \bbz \tau$, where $\Im \tau > 0$.  Let $p\ge q\ge 7$ integers, and let $\Lambda_1= \bbz+\bbz \tau_1,\ldots,\Lambda_q = \bbz+\bbz \tau_q$, $\Im \tau_i>0$, denote lattices in $\bbc$.  Let $E$ (resp. $E_i$) denote the elliptic curve over $\bbc$ whose analytic space is isomorphic to $\bbc/\cO$ (resp. $\bbc/\Lambda_i$).
By Theorem~\ref{diagonal}, we may choose the $\Lambda_i$ such that the Kummer variety of $E_1\times \cdots\times E_q$ has no diagonal rational curves.
Since the property that two elliptic curves are not isogenous is very general on the moduli of the curves, we may further assume that $E_1,\ldots,E_q$ are mutually non-isogenous.
Define 
$$\Lambda := \cO^{p-q}\oplus \bigoplus_{i=1}^q \Hom(\cO,\Lambda_i),$$
and regard $\Lambda\otimes \bbr$ as a $\bbc$-vector space $V$ via the given inclusions $\Lambda_i\hookrightarrow \bbc$ and a choice of embeddings $K\hookrightarrow \bbc$.
Regarding $\Lambda$ as a free $\cO$-module of rank $p+q$ via the obvious action of $\cO$ on $\Hom(\cO,\Lambda)$, and extending by $\bbr$-linearity to define an action of $\cO$ on $V$, we obtain  a 
natural homomorphism $\iota$ from $\cO$ to the 
endomorphism ring of the complex torus $V/\Lambda$.   It also defines a second complex structure on $V$, which we call the $\iota$-structure.

We define a Hermitian form $H$ on $V$ as follows.  If $v = (a_1,\ldots,a_{p-q},\alpha_1,\ldots,\alpha_q)$, where $\alpha_i\in \Hom(\cO,\bbc) = \Hom(\cO,\Lambda_i)\otimes\bbr$
and $w = (b_1,\ldots,b_{p-q},\beta_1,\ldots,\beta_q)$, then
$$H(v,w) = \sum_{i=1}^{p-q} \frac{\bar a_i b_i}{\Im \tau} + \sum_{j=1}^q \frac{\bar \alpha_i(1) \beta_i(1) + \bar \alpha_i(\tau) \beta_i(\tau)}{\Im \tau_i}.$$
As $\Im \tau,\Im \tau_i>0$, this is positive definite.  For all $\sigma \in\bbc\setminus \bbr$ and $a,b,c,d\in \bbr$, we have
$$\Im \frac{(\overline{a+b\sigma}) (c+d\sigma)}{\Im \sigma} = ad-bc,$$
so $\Im H$ restricts to a perfect $\bbz$-valued anti-symmetric pairing on $\Lambda$.  It follows that $V/\Lambda$ is isomorphic to the analytic space of a principally polarized  abelian variety $A_{\Lambda_1,\ldots,\Lambda_q}$.

We define a closed subgroup scheme $G/\bbz$ of the symplectic group $\Sp_{2p+2q}/\bbz$ as follows:
For any commutative ring $R$, let $G(R)$ denote the group of $R$-linear and $\cO$-linear automorphisms of $\Lambda\otimes R$
which respect to the anti-symmetric form 
$$\Im H \colon \Lambda\otimes R\times \Lambda\otimes R\to R.$$
In particular, $G(\bbr)$ is the group of automorphisms of the real vector space $V$ which are symplectic with respect to $\Im H$ and $\cO$-linear.
If $H'$ is the Hermitian form on $V$
determined by $\Im H$ and the $\iota$-structure, then $H'$ has signature $(p,q)$, and $G(\bbr)$ can be regarded as the unitary group $U(p,q)$ defined by $H'$ and the $\iota$-structure.
For all $g\in G$, the complex torus $V/g(\Lambda)$ is polarized by the Hermitian form $g(H)$ defined by 
$$g(H)(v,w) := H(g^{-1}(v), g^{-1}(w)),$$
so $V/g(\Lambda)$ is isomorphic to the 
analytic space of a principally polarized abelian variety $A_g$ endowed with the endomorphism structure $\iota_g\colon \cO\to \End A_g$.

Let $K$ denote the subgroup of $G(\bbr)$ consisting of elements which are $\bbc$-linear with respect to the usual complex structure on $V$.  Then $K \cong U(p)\times U(q)$, and $G(\bbr)/K$ is a Hermitian symmetric domain.  By the Baily-Borel theorem, the quotient $G(\bbz)\backslash G(\bbr)/K$ is isomorphic to the analytic space associated to a complex quasi-projective variety $S$.  The variety $S$ parametrizes a family of abelian varieties $\{A_s\mid s\in S(\bbc)\}$ of dimension $p+q$ together with a principal polarization on each $A_s$ and a map $\iota\colon \cO\to \End A_s$.
This family comes from a scheme $A\to S$.

\begin{thm}
For a very general member $A_s$ of the above family of abelian varieties, there are no rational curves on the Kummer variety of $A_s$.
\end{thm}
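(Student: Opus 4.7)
My plan is to apply Proposition~\ref{family} to the family $A\to S$ and exclude alternative~(1) by exhibiting a single $s_0\in S$ for which $\bar A_{s_0}$ has no nondegenerate rational curve. Alternative~(2) will then give ``no nondegenerate rational curve on $\bar A_s$ for very general $s$,'' and I will upgrade this to ``no rational curve at all'' by using the simplicity of a very general $A_s$.

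For the first step, I would take $s_0\in S$ corresponding to the lattice $\Lambda$ itself (the identity coset in $G(\bbz)\backslash G(\bbr)/K$). A direct unpacking of the complex structure shows that $V/\Lambda$ decomposes as a polarized complex torus into $E^{p-q}\times E_1^2\times\cdots\times E_q^2$: each summand $\cO\otimes\bbr$ is identified with $\bbc$ via the given embedding $K\hookrightarrow\bbc$ and yields $E$, while each summand $\Hom(\cO,\Lambda_i)\otimes\bbr$, evaluated on the $\bbz$-basis $1,\tau$ of $\cO$, becomes $\bbc\oplus\bbc$ with lattice $\Lambda_i\oplus\Lambda_i$ and hence yields $E_i\times E_i$. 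Projecting out one factor from each $E_i\times E_i$ gives a surjection $A_{s_0}\twoheadrightarrow E_1\times\cdots\times E_q$. By the choice of the $\Lambda_i$ via Theorem~\ref{diagonal}, together with the non-isogeny of the $E_i$ and Lemma~\ref{diagonal vs nondegenerate}, the Kummer variety $\overline{E_1\times\cdots\times E_q}$ has no nondegenerate rational curve. Lemma~\ref{surjection} then forces $\bar A_{s_0}$ to have none either, so alternative~(1) of Proposition~\ref{family} is excluded and alternative~(2) must hold.

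For the upgrade, the key input is that for very general $s\in S$ the abelian variety $A_s$ is simple. This follows from general facts about PEL Shimura varieties: the locus in $S$ where $A_s$ admits a nonzero proper abelian subvariety is a countable union of proper closed subvarieties, each corresponding to a proper $\cO$-stable direct-summand decomposition of the Hermitian datum $(\Lambda,\Im H)$, and the generic Mumford--Tate group of this datum contains the derived group of $U(p,q)$, whose standard representation on $H_1(A_s,\bbq)$ is irreducible. Assuming $A_s$ simple, if $C\subset \bar A_s$ is any rational curve with preimage $X\subset A_s$, then $X$ is a curve since $A_s$ contains no rational curves, and the abelian subvariety of $A_s$ generated by the differences $X-X$ is nonzero, hence all of $A_s$. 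Thus $C$ is nondegenerate, contradicting alternative~(2).

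The main obstacle I anticipate is cleanly justifying the simplicity of $A_s$ for a very general point of this particular unitary Shimura variety, which rests on a standard but non-trivial Mumford--Tate computation for the specified PEL datum that I would prefer to cite from the literature rather than prove from scratch; the decomposition $V/\Lambda\cong E^{p-q}\times E_1^2\times\cdots\times E_q^2$ is more routine but does require keeping straight the two complex structures (the usual one on $V$ and the $\iota$-structure) present in the construction.
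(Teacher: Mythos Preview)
Your plan has a genuine gap at the very first step: exhibiting a \emph{single} point $s_0\in S$ for which $\bar A_{s_0}$ has no nondegenerate rational curve does \emph{not} exclude alternative~(1) of Proposition~\ref{family}. Alternative~(1) only asserts the existence of \emph{some} non-empty open $U\subset S$ on which every fiber has a nondegenerate rational curve; there is no reason $U$ must contain your particular $s_0$, since a single closed point can perfectly well lie in the complement of a dense open. To rule out~(1) you need a set of such ``good'' points that meets every non-empty open of $S$, i.e.\ a Zariski-dense set.

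This is precisely what the paper supplies, and it is the one idea you are missing. By weak approximation $G(\bbq)$ is dense in $G(\bbr)$, so its image in $G(\bbz)\backslash G(\bbr)/K = S(\bbc)$ is Zariski-dense. For every $g\in G(\bbq)$ the lattice $g(\Lambda)$ is commensurable with $\Lambda$, so the corresponding abelian variety is isogenous to your $A_{s_0}\cong E^{p-q}\times E_1^2\times\cdots\times E_q^2$ and in particular still surjects onto $E_1\times\cdots\times E_q$. Your computation of $A_{s_0}$ and your invocation of Lemmas~\ref{diagonal vs nondegenerate} and~\ref{surjection} are correct and are exactly the argument the paper applies at each of these Zariski-densely many points; you simply stopped after applying it at one of them. (Your ``upgrade'' from \emph{no nondegenerate rational curve} to \emph{no rational curve at all} via simplicity of the very general $A_s$ addresses a point the paper's own write-up passes over in a single sentence, so that portion of your proposal is if anything more careful than the original.)
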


\begin{proof}

By the weak approximation theorem, $G(\bbq)$ is dense in $G(\bbr)$ in the real topology.  It follows that the image of $G(\bbq)$ is dense in the complex topology in $S(\bbc)$
and therefore Zariski-dense in $S(\bbc)$.  By construction, all elements of $G(\bbq)$ correspond to abelian varieties isogenous to $E_1\times \cdots\times E_q\times E^{p-q}$.
Any such abelian variety admits a surjective homomorphism of abelian varieties to $B := E_1\times \cdots \times E_q$.  We have assumed that the $E_i$ are chosen to be mutually
non-isogenous and so that $\bar B$ has no diagonal rational curves.  By Lemma~\ref{diagonal vs nondegenerate}, this implies that $\bar B$ has no nondegenerate rational curves, so
by Lemma~\ref{surjection}, the Kummer variety of any abelian variety corresponding to an element of $G(\bbq)$ has no nondegenerate rational curve.

By Proposition~\ref{family}, it follows that for a very general point $s\in S(\bbc)$, the Kummer variety of $A_s$ has no rational curve.

\end{proof}

\end{document}